\newtheorem{theorem}{Theorem}[section]
\newtheorem{proposition}[theorem]{Proposition}
\newtheorem{lemma}[theorem]{Lemma}
\newtheorem{corollary}[theorem]{Corollary}
\newtheorem{definition}{Definition}[section]
\newtheorem{example}{Example}[section]
    \newcommand*{\addFileDependency}[1]{
    \typeout{(#1)}
    \@addtofilelist{#1}
    \IfFileExists{#1}{}{\typeout{No file #1.}}
    }
\title{Computing Khovanov homology of tangles}
\author[1]{Li Shen$^{\ast}$}
\author[2,3]{Jian Liu$^{\ast}$}
\author[3,4,5]{Guo-Wei Wei\thanks{Corresponding author: weig@msu.edu}}
\affil[1]{NSF-Simons National Institute for Theory and Mathematics in Biology, Chicago IL.}
\affil[2]{Mathematical Science Research Center, Chongqing University of Technology, Chongqing 400054, China}
\affil[3]{Department of Mathematics, Michigan State University, MI 48824, USA}
\affil[4]{Department of Electrical and Computer Engineering, Michigan State University, MI 48824, USA}
\affil[5]{Department of Biochemistry and Molecular Biology, Michigan State University, MI 48824, USA}
    \renewcommand*{\@fnsymbol}[1]{\ensuremath{\ifcase#1\or \dagger\or *\or *\or
   \mathsection\or \else\@ctrerr\fi}}
\date{}
\begin{document}
    % \linenumbers
    \maketitle
    \footnotetext[2]{Shen and Liu contributed equally to this work.
    The work of Liu was done during his two-year stay at Michigan State University.}

    \paragraph{Abstract}
    The computation of Khovanov homology for tangles has significant potential applications, yet explicit computational studies remain limited. In this work, we present a method for computing the Khovanov homology of tangles via an arc reduction approach, and we derive the Poincar\'e polynomial for simple tangles. Furthermore, we compute the Poincar\'e polynomials of tangles with at most three crossings.

    \paragraph{Keywords}
     Tangle, Khovanov homology, pure arc, simple tangle, Poincar\'{e} polynomial.

\footnotetext[1]
{ {\bf 2020 Mathematics Subject Classification.}  	Primary  57K18; Secondary 57K10,  57K18.
}

\tableofcontents % insert contents

\section{Introduction}

Khovanov homology, introduced by Khovanov \cite{khovanov2000categorification}, is a homological refinement of the Jones polynomial that provides a fundamental invariant for the study of knots and low-dimensional topology. For links, Khovanov homology has been extensively investigated from various perspectives, including computational approaches, structural properties, and applications to topological invariants.

Tangles, which can be viewed as local fragments of links, are subject to more relaxed constraints than links, making them particularly suitable for studying both local and global properties of links and their associated algebraic structures. This also highlights the greater potential of tangles for various applications.
The Khovanov homology of tangles plays a crucial role in studying tangle invariants and understanding their local properties \cite{bar2002khovanov,bar2005khovanov,khovanov2002functor}.
Moreover, Morrison, Walker, Rozansky, and collaborators \cite{clark2009fixing,manolescu2023skein, morrison2022invariants} studied tangle Khovanov homology, including constructions based on skein modules, functoriality under cobordisms, and extensions to higher-dimensional topological invariants.
In \cite{liu2024persistent}, the authors studied the persistent Khovanov homology of tangles, highlighting the practical significance of computing the Khovanov homology of tangles for various applications. The work in \cite{shen2025algorithm} proposes algorithms for computing the Khovanov homology of tangles.

In the original construction, D. Bar-Natan defined a cochain complex for tangles over the additive category, which is obtained from the category of cobordisms through a sophisticated construction.
In \cite{liu2024persistent}, a functor from the above additive category to the abelian category of $\mathbb{k}$-modules is given. This functor induces a cochain complex in the abelian category, providing an effective method for computing the Khovanov homology of tangles.

In this work, we further investigate the computation and properties of the Khovanov homology of tangles. Specifically, for a tangle $T$, suppose there exists an arc that intersects the rest of the tangle at a single crossing, and let the tangle formed by the remaining part be denoted by $T_1$. We then have the following theorem (see Theorem \ref{theorem:arc_reduction}):
\begin{theorem}
If the added crossing is right-handed, then
\[
   H^{k,q}(T) \;\cong\; H^{k,q}(T_1) \oplus H^{k-1,q-1}(T_1),
\]
If the added crossing is left-handed, then
\[
   H^{k,q}(T) \;\cong\; H^{k+1,q+3}(T_1) \oplus H^{k,q+2}(T_1).
\]
Here, $k$ denotes the homological grading and $q$ denotes the quantum grading.
\end{theorem}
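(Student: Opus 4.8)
The plan is to build the cochain complex of $T$ directly from that of $T_1$ by resolving the single new crossing, then identify the resulting mapping cone up to homotopy. Let $a$ be the arc meeting the rest of $T$ at one crossing $c$. Resolving $c$ in its two ways gives two tangle diagrams; in each, the arc $a$ together with its resolution becomes either a plain arc attached to $T_1$ (the $0$-resolution, say) or an arc with a small cap/cup, i.e. $T_1$ with an extra trivial circle-like feature (the $1$-resolution). The Khovanov complex of $T$ is, by construction, the mapping cone of the saddle cobordism $d\colon C(T_1\text{-with-}0\text{-res})\to C(T_1\text{-with-}1\text{-res})$, with the appropriate homological and quantum shifts dictated by whether $c$ is positive or negative.

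**Key reductions.**

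First I would use the delooping/Reidemeister-I type simplification from the functor of \cite{liu2024persistent}: adding a plain arc to $T_1$ is a deformation retract at the level of complexes, so $C(T_1\text{-with-}0\text{-res})\simeq C(T_1)$; and the $1$-resolution side, which contains an isolated small loop, deloops into two copies of $C(T_1)$ with quantum shifts $q^{+1}$ and $q^{-1}$ (the $A$ decomposition $V\cong \Bbbk q\oplus\Bbbk q^{-1}$). Second, I would compute the saddle map under these identifications: it becomes a map $C(T_1)\to C(T_1)q\oplus C(T_1)q^{-1}$ whose components are, respectively, multiplication by $1$ (an isomorphism onto one summand) and $0$ (or vice versa), because the saddle merges the looped component — this is the standard Gaussian-elimination input. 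Third, I would invoke Gaussian elimination (simplification of complexes with an isomorphism component in a differential, as in Bar-Natan's divide-and-conquer) to cancel the isomorphic summand, leaving a mapping cone of the zero map, hence a direct sum. Bookkeeping the shifts: for a right-handed crossing the cone contributes the unshifted $C(T_1)$ together with $C(T_1)$ shifted by $[1]\{1\}$, giving $H^{k,q}(T)\cong H^{k,q}(T_1)\oplus H^{k-1,q-1}(T_1)$; for a left-handed crossing the shifts are mirrored, $[-1]\{-3\}$ and $\{-2\}$, yielding the second formula.

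**Main obstacle and care points.**

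The hard part will be executing the second step cleanly in Bar-Natan's cobordism category before passing to $\Bbbk$-modules: I must verify that, after delooping, the saddle cobordism really does have an invertible component, which requires tracking the explicit neck-cutting / sphere- and tube-relations and confirming the relevant cobordism is the identity (up to sign) rather than something lower-degree. Once that local picture is pinned down, applying the functor of \cite{liu2024persistent} and then Gaussian elimination is formal, and the grading shifts follow from the definitions of the shift conventions $[\,\cdot\,]$ and $\{\cdot\}$ together with the writhe normalization for positive versus negative crossings. A secondary point to check is naturality: the homotopy equivalences must be compatible with the rest of $T_1$ (they are supported in a disk disjoint from the other crossings), so the cancellation is global, not merely local — this is where the hypothesis that $a$ meets the rest of $T$ in exactly one crossing is essential, and I would state it as a lemma that the simplification is an equivalence of complexes of $(T_1$-boundary$)$-modules.
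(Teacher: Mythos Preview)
Your proposal rests on an incorrect local picture. You describe the $1$-resolution as producing ``$T_1$ with an extra trivial circle-like feature,'' which you then plan to deloop; but since the new arc $a$ has both endpoints on the tangle boundary and meets the rest of the tangle in exactly one crossing, the strand it crosses in any smoothing $(T_1)_s$ must itself be an arc (otherwise $a$ would have one endpoint trapped inside a closed curve). Hence \emph{both} resolutions of the new crossing yield two arcs, never a circle, and there is nothing to deloop. Your proposed Gaussian elimination---cancelling an invertible component of the saddle map against a delooped summand---therefore cannot be carried out: there is no $V\cong \mathbb{K}\{+1\}\oplus\mathbb{K}\{-1\}$ factor present, and the ``hard part'' you flagged (verifying an invertible component after delooping) is precisely where the argument breaks.

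The paper's mechanism is different and more direct: in the tangle TQFT of \cite{liu2024persistent}, each arc is assigned the one-dimensional module $W$, and the saddle cobordism between two-arc configurations is sent to the \emph{zero} map $W\otimes W\otimes G_s\to W\otimes W\otimes G_s$, $w\otimes w\otimes x\mapsto 0$. Thus the mapping cone you correctly set up is the cone on zero, and the Khovanov complex of $T'$ splits immediately as $A\oplus B$, with each summand isomorphic to the complex of $T_1\sqcup\Omega$. The extra arc $\Omega$ contributes a tensor factor $W$ (a quantum shift by $-1$), and the remaining work is exactly the grading bookkeeping you sketched. So your mapping-cone framework and your shift accounting are sound; what needs to be replaced is the delooping/elimination step, by the observation that the connecting differential is already zero.
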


In addition, we define the notion of a simple tangle. Roughly speaking, a simple tangle is one in which no subset of arcs forms a closed region. We then obtain the following result (see Theorem \ref{theorem:poincare}).
\begin{theorem}
Let $T$ be a simple tangle consisting of $N$ arcs, endowed with an orientation. Then the Poincar\'e polynomial of $T$ is given by
\[
  \mathcal{P}_{T}(x,y) = y^{-N+n_{+}+n_{-}} (1+xy)^{n_{+}} \,(x^{-1}y^{-3}+y^{-2})^{n_{-}}.
\]
Here, $n_{+}$ and $n_{-}$ denote the numbers of right-handed and left-handed crossings in the tangle, respectively.
\end{theorem}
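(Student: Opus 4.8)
The plan is to prove the formula by induction on the number of crossings $n = n_{+} + n_{-}$, peeling the crossings off one at a time with Theorem \ref{theorem:arc_reduction} until a crossingless diagram remains, and multiplying the corresponding factors.

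First I would restate Theorem \ref{theorem:arc_reduction} in terms of the Poincar\'e polynomial $\mathcal{P}_{T}(x,y) = \sum_{k,q} \dim_{\mathbb{k}} H^{k,q}(T)\, x^{k} y^{q}$. Re-indexing the bigraded summands, the right-handed case becomes $\mathcal{P}_{T}(x,y) = (1+xy)\,\mathcal{P}_{T_{1}}(x,y)$ and the left-handed case becomes $\mathcal{P}_{T}(x,y) = (x^{-1}y^{-3}+y^{-2})\,\mathcal{P}_{T_{1}}(x,y)$; both identities are immediate from shifting the bigrading. Thus each removal of a right-handed crossing contributes a factor $(1+xy)$ and each removal of a left-handed crossing a factor $(x^{-1}y^{-3}+y^{-2})$, and what remains is (i) the base case of no crossings and (ii) the guarantee that a simple tangle carrying a crossing always admits such a reduction. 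For (i), a crossingless simple tangle is a disjoint union of $N_{0}$ arcs (simplicity rules out a closed component), its Khovanov complex is supported in homological degree $0$, and reading off the construction one finds that its Poincar\'e polynomial equals $y^{-N_{0}}$, i.e.\ one generator of quantum degree $-1$ per arc.

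For the inductive step I would use the intersection graph $G(T)$: one vertex for each arc, and one edge for each crossing joining the two arcs that meet there. By the definition of a simple tangle, a subset of arcs bounding a closed region corresponds to a cycle in $G(T)$, so $G(T)$ is a forest (in particular, no arc crosses itself). If $n \geq 1$ then $G(T)$ has an edge and hence a leaf, that is, an arc $a$ meeting the rest of $T$ at exactly one crossing $c$. Put $T_{1} = T \setminus a$ with the restricted orientation; deleting a leaf from a forest yields a forest, so $T_{1}$ is again simple, with $N - 1$ arcs, $n - 1$ crossings, and exactly the crossings of $T$ other than $c$, whose sign $\varepsilon \in \{+,-\}$ is well defined from the orientation. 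Applying the Poincar\'e-polynomial form of Theorem \ref{theorem:arc_reduction} together with the inductive hypothesis for $T_{1}$, and then multiplying by the factor prescribed by $\varepsilon$, yields the stated formula: after all $n$ reductions the crossingless tangle has $N - n_{+} - n_{-}$ arcs, giving the prefactor $y^{-N+n_{+}+n_{-}}$, while the $n_{+}$ right-handed and $n_{-}$ left-handed crossings contribute $(1+xy)^{n_{+}}$ and $(x^{-1}y^{-3}+y^{-2})^{n_{-}}$, the order of removal being irrelevant since these factors commute.

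The step I expect to be the main obstacle is the structural lemma bridging topology and combinatorics: verifying precisely that ``no subset of arcs forms a closed region'' is equivalent to acyclicity of $G(T)$, that this property is inherited by $T \setminus a$, and that deleting a leaf arc destroys exactly one genuine crossing of $T$ with a well-defined sign, so that one of the two cases of Theorem \ref{theorem:arc_reduction} applies verbatim (the orientation being needed precisely so that this sign, and therefore $n_{\pm}$, is meaningful). Once that lemma is secured, the remainder is the grading bookkeeping above together with the short base-case computation.
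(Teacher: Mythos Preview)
Your proposal is correct and follows essentially the same route as the paper. The paper deduces Theorem~\ref{theorem:poincare} from Theorem~\ref{theorem:simple_tangle}, whose proof is exactly your induction: use Lemma~\ref{lemma:tangle_reduction} to locate an arc meeting the rest in at most one crossing, peel it off via Theorem~\ref{theorem:arc_reduction} (or as a free factor $y^{-1}$ if it has no crossing), observe that the remainder is again simple, and iterate. The only cosmetic difference is packaging: you phrase the existence of a removable arc via the intersection graph $G(T)$ being a forest and hence having a leaf, whereas the paper obtains the same conclusion from the counting bound of Proposition~\ref{prop:simple_tangle}(iii) (any $N$ arcs carry at most $N-1$ crossings, proved with Euler's formula) together with pigeonhole; these are two formulations of the same fact, and the ``main obstacle'' you anticipate is precisely what Proposition~\ref{prop:simple_tangle} and Lemma~\ref{lemma:tangle_reduction} supply.
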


Finally, we classify tangles with at most three crossings and compute the Poincar\'e polynomial corresponding to each type.

The paper is organized as follows. In the next section, we present the main results. In Section \ref{section:classification}, we classify tangles with at most three crossings and compute their corresponding Khovanov homology.

%\section{preliminaries}
%½éÉÜK-module, crossing, state cube,±¾ÎÄÖÐËùÓеÄtangleÖ¸µÄÊÇtangle diagram

\section{Main results}

In this section, we present our main results, including the behavior of the Khovanov homology of tangles under a certain arc reduction, as well as the Khovanov homology of simple tangles. For convenience, we take the ground ring to be a coefficient field $\mathbb{K}$.

\subsection{Arc reduction with one crossing}

\begin{theorem}\label{theorem:arc_reduction}
Let $T$ be a tangle, and let $T'$ be the tangle obtained from $T$ by adding an arc with a single crossing, either below or above $T$.
\begin{itemize}
\item If the added crossing is right-handed, then
\[
   H^{k,q}(T') \;\cong\; H^{k,q}(T) \oplus H^{k-1,q-1}(T),
\]
\item If the added crossing is left-handed, then
\[
   H^{k,q}(T') \;\cong\; H^{k+1,q+3}(T) \oplus H^{k,q+2}(T).
\]
\end{itemize}
Here, $k$ denotes the homological grading and $q$ denotes the quantum grading.
\end{theorem}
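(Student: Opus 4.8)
The plan is to run the skein mapping--cone argument on the single new crossing $c$. Resolving $c$ in its two ways produces diagrams $T'_0,T'_1$, each carrying exactly the crossings of $T$, and the cube--of--resolutions construction (equivalently, Bar--Natan's presentation of the complex with respect to one crossing \cite{bar2005khovanov,liu2024persistent}) displays $C(T')$ as the total complex of a chain map $\psi\colon C(T'_0)\to C(T'_1)$ coming from the saddle at $c$. Inside $C(T')$ the $0$--smoothing block appears as $C(T'_0)$ with homological/quantum shift $[0]\{1\}$ and the $1$--smoothing block as $C(T'_1)$ with shift $[1]\{2\}$ when $c$ is right--handed, and with shifts $[-1]\{-2\}$ and $[0]\{-1\}$ when $c$ is left--handed; here $[\,\cdot\,]$ and $\{\,\cdot\,\}$ denote homological and quantum shifts, and these values are just the usual $|v|-n_-$ and $|v|+n_+-2n_-$ normalizations applied to the one extra crossing. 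Only the sign of $c$ enters, so the argument is the same whether the arc was drawn below or above $T$. This set--up yields a long exact sequence relating $H(T')$ to the shifted groups of $T'_0$ and $T'_1$, and the theorem follows from two facts: (a) $H^{k,q}(T'_i)\cong H^{k,q+1}(T)$ for $i=0,1$; and (b) the connecting homomorphism of the sequence is zero.

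For (a): since the added arc $a$ meets the rest of $T'$ only in $c$, a small disk $D$ contains all of $a$ and $c$ while meeting $T$ in one unknotted strand. Cutting $T$ open along that strand gives a tangle $T^{\flat}$ with two extra endpoints, with $T=T^{\flat}\circ\mathbb 1$ and $T'_i=T^{\flat}\circ U_i$, where $U_0,U_1$ are the two crossingless four--ended tangles obtained by smoothing $c$. By planar functoriality of the Khovanov complex, $C(T'_i)=C(T^{\flat})\circ U_i$, and since $U_i$ merely re--routes the cut strand out to the two new endpoints of $T'$ without any crossing or enclosed region, the effect on the cube of $T^{\flat}$ is, vertex by vertex, to append a single pure arc --- except on those resolutions where the re--routed strand lies on a closed circle, where one circle is traded for one arc. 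Applying the delooping isomorphism $V\cong\mathbb K\{1\}\oplus\mathbb K\{-1\}$ to those circles throughout the cube of $T$ and cancelling the matching acyclic summands by Gaussian elimination then identifies $C(T'_i)$ with $C(T)\{-1\}$ up to homotopy, which is (a).

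For (b): the connecting map is $F$ applied to the saddle at $c$, evaluated at each vertex of the cube. On every vertex this saddle is a re--pairing of the two planar tangles $U_0,U_1$ inside $D$ that creates and destroys no closed circle, and the functor $F$ of \cite{liu2024persistent} sends such saddles to zero; hence $\psi=0$, the long exact sequence degenerates into short exact sequences, and $H^{k,q}(T')\cong H^{k,q}\!\big(C(T'_0)\{1\}[0]\big)\oplus H^{k,q}\!\big(C(T'_1)\{2\}[1]\big)$ when $c$ is right--handed, and the analogous direct sum with shifts $\{-2\}[-1]$ and $\{-1\}[0]$ when $c$ is left--handed. Substituting (a) and reading off the gradings gives $H^{k,q}(T')\cong H^{k,q}(T)\oplus H^{k-1,q-1}(T)$ in the first case and $H^{k,q}(T')\cong H^{k+1,q+3}(T)\oplus H^{k,q+2}(T)$ in the second.

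I expect step (a) to be the crux. The re--routing genuinely changes the connectivity of the underlying $1$--manifold, so $T'_i$ is not isotopic --- not even Reidemeister--equivalent --- to $T$ with a disjoint pure arc adjoined, and the equivalence $C(T'_i)\simeq C(T)\{-1\}$ must be produced through delooping and cancellation inside the cube rather than by any move on diagrams; carrying the gradings correctly through those cancellations is the delicate point. One must also check carefully in (b) that the saddle at $c$ is a circle--preserving re--pairing on \emph{every} resolution of $T$ (so that it is indeed annihilated by $F$), rather than a merge or split of closed components.
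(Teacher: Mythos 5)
Your overall strategy coincides with the paper's: you split the complex over the single new crossing, observe that the saddle at that crossing is an arc--to--arc re-pairing and hence is annihilated by the functor, conclude that the total complex is the direct sum of the two shifted halves, and your shift bookkeeping ($[0]\{1\}$, $[1]\{2\}$ in the right-handed case, $[-1]\{-2\}$, $[0]\{-1\}$ in the left-handed case) reproduces the stated bigradings correctly. The divergence, and the gap, is in your step (a). The paper's identification there is immediate: at every vertex $(i,s)$ of the cube the resolution is $T_s$ together with one extra arc, so each half is $\mathcal{G}([[T]])\otimes W = C(T)\{-1\}$ on the nose, with no delooping, no Gaussian elimination, and no homotopy equivalence required. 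Your route instead invokes delooping ``on those resolutions where the re-routed strand lies on a closed circle,'' and that step is not an argument as written: an arc carries the one-dimensional module $W$ while a circle carries the two-dimensional $V$, delooping produces two shifted summands, and you never exhibit the isomorphism component of the differential along which one of them is to be cancelled. ``Cancelling the matching acyclic summands'' is an assertion, not a construction, and carrying it out uniformly over the whole cube (so that the surviving pieces assemble into $C(T)\{-1\}$ with the correct differentials) is precisely the content that is missing.

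More seriously, the two halves of your proof are incompatible with each other. If there really were a resolution $s$ in which the strand of $T$ at the new crossing closes up into a circle of $T_s$, then at the vertex $(0,s)$ the saddle at $c$ would be a circle--arc merge rather than an arc--arc re-pairing; such merges are \emph{not} sent to zero by the functor, so your claim (b) that $\psi=0$ --- the backbone of the entire decomposition --- would fail at that vertex, and the long exact sequence would not split. Conversely, if (as the paper asserts) the crossing point lies on an arc of $T_s$ in every resolution $s$, then (a) is trivial and your delooping machinery is unnecessary. You correctly identify both of these as the delicate points in your closing remarks, but you resolve neither: as it stands the proposal proves the theorem only modulo exactly the verifications you defer, which are the substance of the result.
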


\begin{proof}
Since $T'$ has one more crossing than $T$, we place this additional crossing at the first coordinate of the state cube $\{0,1\}^{n+1}$, where $n$ denotes the number of crossings of $T$. Observe that $T$ together with the arc produces exactly one crossing, which necessarily involves an arc of $T$. Consequently, performing a smoothing at the first coordinate always corresponds to resolving a crossing formed by two arcs, rather than a crossing between a circle and the arc. This process is illustrated in Figure~\ref{figure:arc_smoothing}.
\begin{figure}[h]
  \centering
  % Requires \usepackage{graphicx}
   \includegraphics[width=0.15\textwidth]{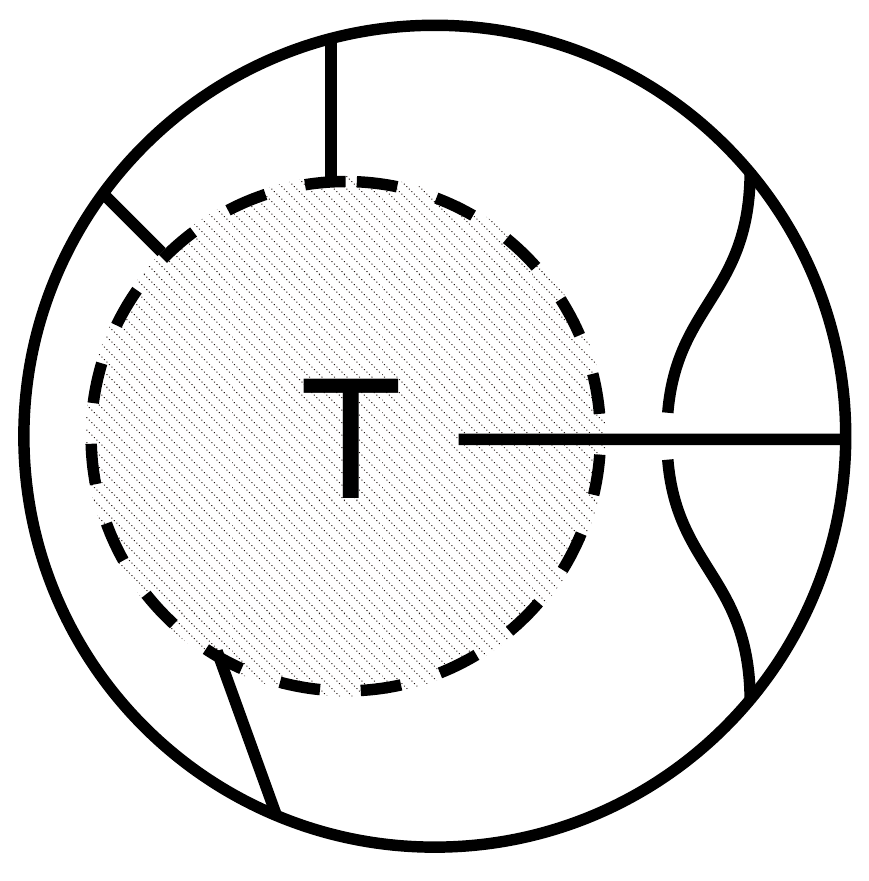}\\
  \caption{Illustration of the tangle $T'$ obtained from $T$ by adding an arc with a single crossing.}\label{figure:arc_smoothing}
\end{figure}

Hence we obtain
\begin{align*}
    & T'_{(0,s)} = T_{s} \sqcup \Omega, \\
    & T'_{(1,s)} = T_{s} \sqcup \Omega,
\end{align*}
where $\Omega$ denotes an arc. Next, we consider the map
\[
   d_{\xi} : T'_{u} \longrightarrow T'_{u'},
\]
where $\xi \in \{0,1,\star\}^{n+1}$ represents an edge of the state cube connecting the states $u$ and $u'$. The nontrivial maps $d_{\xi}$ on $T'_{(0,s)}$ fall into exactly two cases:
\begin{align*}
    & d_{(0,\zeta_1)}: \; T'_{(0,s)} \longrightarrow T'_{(0,s')}, \\
    & d_{(\star,\zeta_2)}: \; T'_{(0,s)} \longrightarrow T'_{(1,s)},
\end{align*}
where $\zeta_1 \in \{0,1,\star\}^{n}$ and $\zeta_2 \in \{0,1\}^{n}$.
The induced map
\[
   \mathcal{G}(d_{(\star,\zeta_2)}): \mathcal{G}(T'_{(0,s)}) \longrightarrow \mathcal{G}(T'_{(1,s)})
\]
is necessarily the zero map. Indeed, smoothing at the first coordinate corresponds to resolving a crossing of two arcs, so the associated morphism takes the form
\[
   \mathcal{G}(d_{(\star,\zeta_2)}): W \otimes W \otimes G_{s} \;\longrightarrow\; W \otimes W \otimes G_{s},
   \quad w \otimes w \otimes x \longmapsto 0,
\]
where $G_{s}$ denotes the TQFT construction corresponding to the remaining part of the state $s$. Therefore, every map $\mathcal{G}(d_{\xi})$ acting on $\mathcal{G}(T'_{(0,s)})$ lands in some $\mathcal{G}(T'_{(0,s')})$.

Let
\[
   A = \bigoplus_{s} \mathcal{G}(T'_{(0,s)}),
   \qquad
   B = \bigoplus_{s} \mathcal{G}(T'_{(1,s)}).
\]
It then follows that
\[
   \mathcal{G}([[T']]) \;=\; A \oplus B.
\]
Recall that the differential on $\mathcal{G}([[T']])$ is given by
\[
   \partial_{T'} = \sum_{\xi} \mathcal{G}(d_{\xi}),
\]
where the sum runs over all edges of the state cube of $T'$. Hence we obtain
\begin{align*}
   & \partial_{T'}(A) \subseteq A, \\
   & \partial_{T'}(B) \subseteq B.
\end{align*}
This shows that both $(A, \partial_{T'}|_{A})$ and $(B, \partial_{T'}|_{B})$ are subcomplexes of $(\mathcal{G}([[T']]), \partial_{T'})$. It follows that
\begin{equation}\label{equation:isomorphism}
  (\mathcal{G}([[T']]), \partial_{T'}) = (A, \partial_{T'}|_{A})\oplus (B, \partial_{T'}|_{B}).
\end{equation}

We now proceed by considering the cases separately.

\medskip
\noindent\textbf{(i) The first coordinate corresponds to a right-handed crossing.}
In this case, we have
\[
   n_{+}(T') = n_{+}(T) + 1,
   \qquad
   n_{-}(T') = n_{-}(T).
\]
By Equation~\eqref{equation:isomorphism}, the Khovanov complex decomposes as
\[
   Kh^{\ast}(T') \;\cong\; Kh^{\ast}(T \sqcup \Omega) \oplus Kh^{\ast-1}(T \sqcup \Omega).
\]
Here, the shift $\ast-1$ arises from the fact that when $\ell(1,s) = k$, one has $\ell(s) = k-1$.
Moreover,
\[
   Kh^{\ast}(T \sqcup \Omega) \;\cong\; Kh^{\ast}(T)\otimes W.
\]
Thus there is an isomorphism
\[
   \rho: H^{k}(T') \;\stackrel{\cong}{\longrightarrow}\; H^{k}(T) \oplus H^{k-1}(T).
\]

Let $[x]\in H^{k}(T')$ and denote its quantum grading by $\Phi([x])$. Then
\[
   \Phi([x]) \;=\; k + n_{+}(T') - n_{-}(T') + \theta(x).
\]
The element $[x]$ corresponds to an element in $H^{\ast}(T)\otimes W$, and under the projection to $H^{\ast}(T)$ via $\rho([x])$, its quantum degree increases by one since $w$ has quantum degree $-1$. Hence
\[
   \theta(\rho([x])) = \theta(x) + 1.
\]

If $[x]\in H^{k}(T)$, then its quantum degree under $\rho$ is
\[
   \Phi_{1}([x])
   = k + n_{+}(T) - n_{-}(T) + (\theta(x)+1)
   = \Phi([x]).
\]
If $[x]\in H^{k-1}(T)$, then in $H^{k-1}(T)\otimes W$ we have
\[
   \Phi_{2}([x])
   = (k-1) + n_{+}(T) - n_{-}(T) + (\theta(x)+1)
   = \Phi([x]) - 1.
\]
Therefore, we have
\[
   H^{k,q}(T') \;\cong\; H^{k,q}(T) \oplus H^{k-1,q-1}(T),
\]
where $q$ denotes the quantum grading.

\medskip
\noindent\textbf{(ii) The first coordinate corresponds to a left-handed crossing.}
In this case, we have
\[
   n_{+}(T') = n_{+}(T),
   \qquad
   n_{-}(T') = n_{-}(T) + 1.
\]
Equation~\eqref{equation:isomorphism} yields
\[
   Kh^{\ast}(T') \;\cong\; Kh^{\ast}(T \sqcup \Omega) \oplus Kh^{\ast-1}(T \sqcup \Omega).
\]
Thus there is an isomorphism
\[
   \rho: H^{k}(T') \;\stackrel{\cong}{\longrightarrow}\; H^{k+1}(T) \oplus H^{k}(T).
\]

Let $[x]\in H^{k}(T')$ with quantum grading $\Phi([x])$.
If $[x]\in H^{k+1}(T)$, then under $\rho$ its quantum degree is
\[
   \Phi_{1}([x])
   = (k+1) + n_{+}(T) - n_{-}(T) + (\theta(x)+1)
   = \Phi([x]) + 3.
\]
If $[x]\in H^{k}(T)$, then its quantum degree is
\[
   \Phi_{2}([x])
   = k + n_{+}(T) - n_{-}(T) + (\theta(x)+1)
   = \Phi([x]) + 2.
\]
Therefore, one has
\[
   H^{k,q}(T') \;\cong\; H^{k+1,q+3}(T) \oplus H^{k,q+2}(T).
\]

Combining the above cases, we complete the proof.
\end{proof}

\subsection{Simple tangles}

\begin{definition}
Let $T$ be a tangle, consisting of arcs and circles. An \emph{arc} of $T$ is called \textbf{pure} if for any point on the arc, each of the two sides of the arc can be connected to the boundary of $T$ by a path that does not intersect any other component of $T$.
\end{definition}

\begin{example}
As illustrated in Figure~\ref{figure:simple_tangle}, in the left tangle none of the arcs is pure. For instance, for arc $a$, one side can be connected to the boundary by a path without crossings,
while on the other side no such crossing-free path exists. In contrast, in the right tangle all arcs are pure; for example, both sides of arc $b$ can be connected to the boundary through paths without crossings.
\begin{figure}[h]
  \centering
  % Requires \usepackage{graphicx}
  \includegraphics[width= 0.4\textwidth]{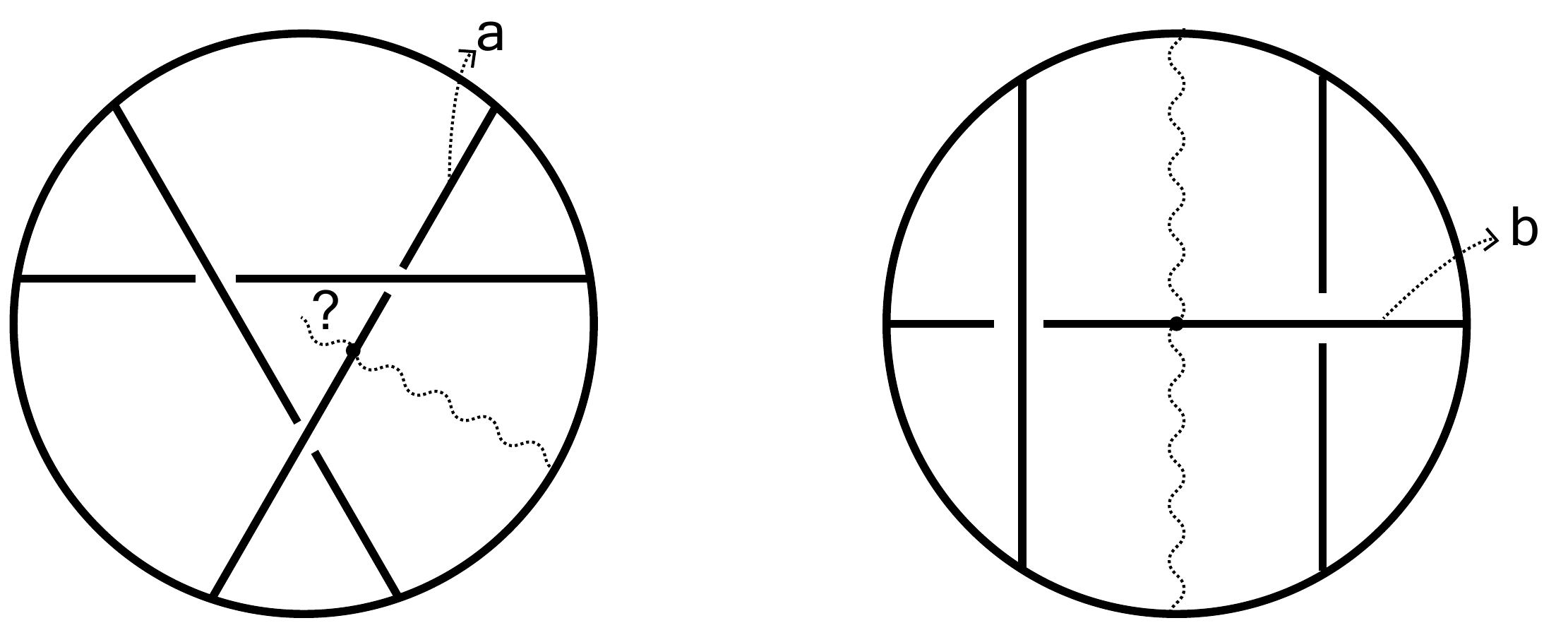}\\
  \caption{The left diagram depicts a tangle without pure arcs. The right diagram illustrates a simple tangle.}\label{figure:pure_arc}
\end{figure}
\end{example}

\begin{definition}
Let $T$ be a tangle. We say that $T$ is a \textbf{simple tangle} if every arc of $T$ is pure.
\end{definition}

Intuitively, a tangle is simple if no collection of arcs forms a closed region within it.

\begin{proposition}\label{prop:simple_tangle}
Let $T$ be a simple tangle. Then we have
\begin{enumerate}[label=(\roman*)]
    \item No arc of $T$ crosses any circle of $T$.
    \item No arc of $T$ has a self-crossing.
    \item Any collection of $N$ arcs in $T$ can produce at most $N-1$ crossings.
\end{enumerate}
\end{proposition}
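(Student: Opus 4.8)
My plan is to derive all three parts from one mechanism. If an arc $a$ is pure, then from each side of $a$ there is a path to $\partial D^2$ (where $D^2$ is the disk in which $T$ is drawn) that meets $T$ nowhere; I claim each of (i)--(iii), if violated, forces such a path to be confined to a region of $D^2$ whose closure misses $\partial D^2$, which is absurd. The tool is the following \emph{trapping observation}: suppose $\gamma$ is a closed curve with $\gamma\subseteq T$ and $\gamma\subseteq\mathrm{int}(D^2)$, and some arc $a$ has a point $p$ lying on no crossing such that one of the two local sides of $a$ at $p$ lies in a face $F$ of $D^2\setminus\gamma$ with $\overline{F}\cap\partial D^2=\emptyset$; then $a$ is not pure, because the escape path from that side is disjoint from $\gamma$, hence is a connected subset of $D^2\setminus\gamma$ that meets $F$, hence lies in $F$, and so never reaches $\partial D^2$. (I am reading ``pure'' as providing a path disjoint from all of $T$; this is forced, since a single arc with one kink has the weaker ``disjoint from every \emph{other} component'' property, so (ii) would fail otherwise.)

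Part (ii) is then immediate: if $a\colon[0,1]\to D^2$ has a self-crossing, pick a pair $a(s)=a(t)$ with $s<t$ minimizing $t-s$; then $\gamma:=a([s,t])$ is a simple closed curve inside $a\cap\mathrm{int}(D^2)$, it bounds a disk $R\subseteq\mathrm{int}(D^2)$, and at any point $p\in\gamma$ lying on no crossing one side of $a$ lies in $R^{\circ}$, so the observation applies. For part (i), let $C$ be a circle of $T$; since $C\subseteq\mathrm{int}(D^2)$, the circle $\partial D^2$ sits in a single face $F_\infty$ of $D^2\setminus C$, and every other face has closure disjoint from $\partial D^2$. As $C$ is null-homologous in $D^2$, its two local sides at any of its regular points lie in distinct faces; hence an arc $a$ crossing $C$, being traversed inward from $\partial D^2$, must enter a face $F\neq F_\infty$, and a point of $a$ inside $F$ (both of whose sides lie in $F$) contradicts the observation. (If circle components are assumed embedded, $F$ is simply the disk bounded by $C$ and no homology is needed.)

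For part (iii), fix any arcs $a_1,\dots,a_N$ of $T$ and let $c$ be the number of crossings among them; by (ii) each is a transverse double point of two distinct $a_i$. Regard $A:=a_1\cup\cdots\cup a_N$ as a graph whose vertices are those $c$ crossings together with the $2N$ endpoints on $\partial D^2$ and whose edges are the arc-segments joining consecutive vertices; counting gives $c+2N$ vertices and $2c+N$ edges, so $\chi(A)=N-c$. Since the interiors of the edges of $A$ contain no crossing among the $a_i$, distinct edges meet only at shared vertices, so any simple cycle in the graph $A$ is an embedded circle; all of its vertices are crossings, hence it lies in $\mathrm{int}(D^2)$ and bounds a disk there, and by (ii) it cannot lie in a single $a_i$ (the embedded arc $a_i$ contributes only a path to $A$), so it passes through a point lying on no crossing. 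The observation then forbids any cycle, so $A$ is a forest; therefore $\chi(A)$ equals the number of connected components of $A$, which is at least $1$, whence $c\leq N-1$.

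I expect the routine parts to be the Euler-characteristic count and the observation that the argument applies verbatim to any subcollection of arcs. The parts needing care --- and where the real work lies --- are topological: (a) confirming the escape path is genuinely trapped, which rests on the precise meaning of ``pure'' and on the complementary-region structure of $D^2\setminus\gamma$; (b) the minimal-loop extraction in (ii); and, above all, (c) showing in (iii) that a combinatorial cycle in the graph $A$ is realized as an embedded circle in the disk so that the Jordan curve theorem applies.
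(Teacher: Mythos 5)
Your argument is correct, and for parts (i) and (ii) it is essentially the paper's trapped-region argument made precise: where the paper simply asserts that a point inside the circle, or on the loop created by a self-crossing, cannot reach the boundary without a crossing, you isolate this as a single Jordan-curve ``trapping'' lemma and apply it uniformly to all three parts. Your interpretive remark about ``pure'' is also on target: the definition as literally written (a path avoiding every \emph{other} component) would make (ii) false for a kinked arc, and the paper's own proof tacitly uses the stronger reading you adopt (``a path without crossings''). Where you genuinely diverge is part (iii). The paper takes only the crossings as vertices and the finite inter-crossing segments as edges, asserts $f=1$ because ``these arcs do not form closed regions,'' and then manipulates Euler's formula; as printed that computation is shaky (substituting the lower bound $l\ge 2N-m$ into $m+f-l=2$ reverses the inequality, and even the inequality as stated only yields $m\le N$, not $m\le N-1$, and connectivity of the graph is never addressed). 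You instead include the $2N$ endpoints as vertices, compute $\chi(A)=N-c$ directly, and---this is the real content---\emph{prove} that $A$ contains no cycle by feeding any embedded cycle back into the trapping lemma, so that $A$ is a forest and $N-c=b_0(A)\ge 1$. This supplies the justification the paper leaves implicit (that simplicity forbids closed regions among the arcs) and fixes the bookkeeping. The points you flag as needing care (embeddedness of a combinatorial cycle given transverse double points, the minimal-loop extraction, and the face structure of $D^2\setminus\gamma$) are exactly the right ones and are all standard for generic diagrams, so I see no gap.
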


\begin{proof}
(i) Suppose an arc crosses a circle. Then the portion of the arc lying inside the circle cannot be connected to the boundary of the tangle by a path without crossings, contradicting purity.

(ii) Suppose an arc has a self-crossing. Then the crossing together with a segment of the arc would form a closed region. Any point on that segment cannot be connected to the boundary of the tangle by a path without crossings, again a contradiction.

(iii) Consider the figure formed by the given $N$ arcs and denote by $m$ the number of crossings among them. Whenever two crossings are joined by a finite segment of an arc, we keep this segment. Thus the resulting figure is composed of finitely many arc segments and crossings. Since each crossing involves two arcs, there are $2m$ incident arc ends in total. If two crossings share an incident arc, then these crossings are connected by a finite segment. Hence, among the $2m$ incident arcs, at least $2N-m$ are shared, which implies that there are at least $2N-m$ finite segments. Denote by $l$ the number of such finite segments, so $l \geq 2N-m$.

By Euler's formula for planar graphs we have
\[
   m + f - l = 2,
\]
where $f$ is the number of faces. Since $T$ is simple, these arcs do not form closed regions, hence $f=1$. Therefore
\[
   m + 1 - (2N-m) \leq 2,
\]
which implies $m \leq N-1$. Thus any collection of $N$ arcs in a simple tangle produces at most $N-1$ crossings.
\end{proof}

\begin{corollary}
Let $T$ be a simple tangle. Then $T$ is the disjoint union of a tangle whose components are pure arcs and a link.
\end{corollary}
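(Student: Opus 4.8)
The plan is to peel off the circle components of $T$ from its arc components and to observe, via Proposition~\ref{prop:simple_tangle}, that the two families never interact. Let $T_a$ denote the sub-diagram of $T$ formed by all of its arc components and let $L$ denote the sub-diagram formed by all of its circle components, so that the diagram $T$ is the superposition of $T_a$ and $L$. By Proposition~\ref{prop:simple_tangle}(i) no arc of $T$ crosses a circle of $T$, so there are no crossings between $T_a$ and $L$; by part~(ii) no arc has a self-crossing, so every crossing of $T_a$ is a crossing of two distinct arcs and every crossing of $L$ is a crossing of two circles. Hence $L$ is a link diagram and $T_a$ is a tangle diagram all of whose components are arcs. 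Finally, each component of $T_a$ is pure: it is a pure arc of $T$, and discarding the circles of $T$ only removes components that a connecting path is required to avoid, so the defining property of purity persists in $T_a$. Thus, as a planar diagram, $T$ is already the disjoint union $T_a \sqcup L$ of a tangle whose components are pure arcs and a link.

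If one additionally wants this disjointness realized by an ambient isotopy that pushes $L$ entirely off of $T_a$, the step to carry out is the following. Simplicity of $T$ forbids any collection of arcs from enclosing a region — this is exactly the $f = 1$ conclusion obtained from Euler's formula in the proof of Proposition~\ref{prop:simple_tangle}(iii), now applied to the full collection of arcs of $T$ — so every connected component of the complement of $T_a$ in the ambient disk $D$ meets $\partial D$. Each circle of $L$ lies in one such complementary region, and can therefore be isotoped, within that region, into a collar neighborhood of $\partial D$. Performing this for every circle of $L$ moves $L$ into a neighborhood of $\partial D$ disjoint from $T_a$, exhibiting $T$ as $T_a \sqcup L$.

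The only point requiring genuine care is the claim that no family of arcs of a simple tangle encloses a region, together with the resulting push-off isotopy; both reduce to the planar-graph bookkeeping (crossings as $4$-valent vertices, arc endpoints as univalent vertices on $\partial D$) and the Euler-characteristic count already performed in Proposition~\ref{prop:simple_tangle}(iii), so I do not expect any substantive obstacle beyond writing the argument carefully.
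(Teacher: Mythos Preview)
Your proof is correct and follows essentially the same route as the paper: separate the arc components from the circle components and invoke Proposition~\ref{prop:simple_tangle}(i) to see there are no crossings between the two families. The paper's version is terser --- it simply cites the definition of simple tangle to declare the arc sub-tangle pure, whereas you explicitly check that purity persists after deleting the circles --- and the paper does not carry out the ambient-isotopy push-off in your second paragraph, treating ``disjoint union'' purely at the diagrammatic level; that extra step is sound but unnecessary for the corollary as stated.
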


\begin{proof}
By Proposition~\ref{prop:simple_tangle}\,(i), arcs do not cross circles, hence $T$ decomposes as $A \sqcup C$, where $A$ is the union of all arcs and $C$ the union of all circles. By the definition of a simple tangle, every arc is pure, so $A$ is a tangle of pure arcs. The component $C$ is a link, which completes the proof.
\end{proof}

\begin{proposition}\label{proposition:simple_tangle}
Let $T$ be a connected tangle consisting of $N$ arcs and $N-1$ crossings. Then $T$ is a simple tangle.
\end{proposition}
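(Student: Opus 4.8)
The plan is to build from the diagram of $T$ a planar graph, show that the hypotheses force it to be a \emph{tree}, and then use the fact that a tree cannot enclose a region. First I would dispose of the case $N=1$: then $T$ is a single arc with no crossings, and it is trivially simple. So assume $N\ge 2$. From connectedness I would extract two facts. Since $T$ is connected and $N\ge 2$, no arc can be crossing‑free (a crossing‑free arc would be disjoint from every other component, disconnecting $T$), so every arc of $T$ meets at least one crossing. Moreover the \emph{crossing graph} $H$ of $T$ — one vertex per arc, one edge per crossing joining the two arcs meeting there — is connected precisely because $T$ is connected, and having $N$ vertices and $N-1$ edges it is a tree; in particular $T$ has no self‑crossing and no two arcs cross more than once (these last facts are reassuring but not strictly needed below).

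Next I would introduce the planar graph $\Gamma$ obtained from the diagram by taking the crossings as vertices and, as edges, the finitely many \emph{interior segments} — the sub‑arcs running from one crossing to another — while discarding the \emph{tail} segments that run from a crossing to the boundary $\partial D$ of the tangle disk. Writing $c_i$ for the number of crossings lying on the $i$‑th arc, one has $\sum_i c_i = 2(N-1)$, and since every $c_i\ge 1$ the $i$‑th arc contributes exactly $c_i-1$ interior segments, so $\Gamma$ has $N-1$ vertices and $\sum_i(c_i-1)=N-2$ edges. A short argument shows $\Gamma$ is connected: the crossings lying on a single arc occur in a definite order along that arc and consecutive ones are joined by an interior segment, so they all lie in one component of $\Gamma$, and connectedness of $H$ then pulls every crossing into a single component. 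A connected graph with $N-1$ vertices and $N-2$ edges is a tree (equivalently, Euler's formula gives exactly one face), so $\Gamma$ is a tree; and a finite tree embedded in the plane does not separate it, i.e. $\Gamma$ has no bounded complementary region.

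Finally I would show that failure of simplicity contradicts this. Suppose some arc $a$ is not pure, witnessed by a point $p\in a$ and a local side of $a$ at $p$ from which $\partial D$ cannot be reached within the complement of $T$; let $U$ be the face of $D\setminus T$ adjacent to $a$ at $p$ on that side. I would first verify $\overline{U}\cap\partial D=\emptyset$: if $\overline U$ contained a point $z\in\partial D$, then arbitrarily small neighborhoods of $z$ meet $U$ and also meet $\partial D$ (whether or not $z$ is an arc endpoint, $D\setminus T$ near $z$ is a half‑disk or a union of quarter‑disks, one of which lies in $U$), so $U$ could be joined to $\partial D$, contradicting non‑purity. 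Hence $U$ is a bounded face of $D\setminus T$, and since $U$ never touches $\partial D$ its boundary can contain no tail (a tail has an endpoint on $\partial D$, and boundaries of faces are unions of whole arrangement edges). Thus $\partial U\subseteq\Gamma$, so $U$ is a bounded complementary region of the embedded tree $\Gamma$ — impossible. Therefore every arc of $T$ is pure, i.e. $T$ is simple; this also gives the converse of Proposition~\ref{prop:simple_tangle}(iii) in the connected extremal case.

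The combinatorial core — the edge count and the conclusion that $\Gamma$ is a tree — is short, so the main obstacle is the point‑set bookkeeping in the last paragraph: one must argue cleanly that ``a side of $a$ at $p$ cannot reach $\partial D$'' is equivalent to ``the adjacent face $U$ has closure disjoint from $\partial D$'', and that after deleting the tails and passing from the disk to the whole plane the region $U$ is still genuinely \emph{bounded} rather than merged into the unbounded face.
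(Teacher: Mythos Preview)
Your proof is correct and considerably more detailed than the paper's. The paper argues by contradiction in three lines: if $T$ is not simple, some $n$ arcs ``form a closed region'' and hence carry at least $n$ crossings among themselves; connectedness forces the remaining $N-n$ arcs to contribute at least $N-n$ further crossings; total $\ge N$, contradiction. This is essentially a counting argument in the crossing graph (your $H$), but it leans on the informal equivalence ``not simple $\Leftrightarrow$ some arcs enclose a region'' and leaves the two crossing bounds unjustified. You instead build the planar graph $\Gamma$ of crossings and interior segments, compute $|V|=N-1$, $|E|=N-2$, prove connectedness via $H$, and conclude $\Gamma$ is a tree with no bounded complementary region; then you derive the contradiction directly from the \emph{definition} of a pure arc. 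This is the same graph the paper constructs in the proof of Proposition~\ref{prop:simple_tangle}(iii), so your argument can be read as turning that Euler-formula computation around to prove the converse in the extremal connected case. What the paper's route buys is brevity; what yours buys is that it does not rely on the unproved ``closed region'' heuristic and makes the topological step (non-purity $\Rightarrow$ a face avoiding $\partial D$ $\Rightarrow$ a bounded face of $\Gamma$) explicit. Your final caveat about the point-set bookkeeping is well placed; that is indeed the only part requiring care, and your outline of it is sound.
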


\begin{proof}
Suppose, for contradiction, that $T$ is not simple. Then there exists a collection of $n$ arcs forming a closed region. These $n$ arcs contribute at least $n$ crossings. Since $T$ is connected, the remaining $N-n$ arcs must also contribute at least $N-n$ crossings. Hence the total number of crossings is at least
\[
n+(N-n) = N,
\]
which contradicts the assumption that $T$ has exactly $N-1$ crossings. Therefore, $T$ must be simple.
\end{proof}

Let $G$ be a group and $\mathbb{K}$ a field. The \emph{group algebra} $\mathbb{K}[G]$ is the $\mathbb{K}$-vector space with basis $\{\,v_g \mid g \in G\,\}$, consisting of all finite linear combinations
\[
\sum_{g \in G} a_g v_g, \qquad a_g \in \mathbb{K},
\]
with multiplication defined by $v_g \cdot v_h = v_{gh}$ and extended bilinearly.

The algebra $\mathbb{K}[G]$ is associative and unital, with the unit given by $v_e$, where $e$ denotes the identity element of $G$.

\begin{example}
If $G = \mathbb{Z}/2\mathbb{Z} = \{0,1\}$ with addition mod $2$, then $\mathbb{K}[G]$ has basis $\{e_0, e_1\}$ and multiplication
\[
e_1 \cdot e_1 = e_0.
\]
Thus, $\mathbb{K}[G] \cong \mathbb{K}[x]/(x^2-1)$, via the identification $x \mapsto e_1$.
\end{example}

\begin{example}
If $G = \mathbb{Z}$, then we have
\[
\mathbb{K}[G] \cong \mathbb{K}[x, x^{-1}].
\]
This is the algebra of Laurent polynomials, with $x$ corresponding to the generator $1 \in \mathbb{Z}$.
\end{example}

\begin{lemma}\label{lemma:tangle_reduction}
Let $T$ be a simple tangle. Then there exists an arc in $T$ that intersects all other arcs in at most one crossing.
\end{lemma}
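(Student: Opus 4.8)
The plan is to translate the statement into an elementary fact about forests. First I would introduce the \emph{crossing graph} $G$ of $T$: its vertices are the arcs of $T$, and for each crossing between two distinct arcs we add one edge joining the corresponding vertices (so $G$ is a priori a multigraph). By Proposition~\ref{prop:simple_tangle}(ii) no arc has a self-crossing, and by Proposition~\ref{prop:simple_tangle}(i) no arc crosses a circle; hence every crossing lying on an arc contributes exactly one edge of $G$, and the number of crossings on a given arc equals the degree of the corresponding vertex in $G$. Thus it suffices to find a vertex of $G$ of degree at most one.

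Next I would show that $G$ is a forest. Suppose $G$ contains a cycle through the vertices (arcs) $a_1, a_2, \dots, a_k$ with $a_i$ crossing $a_{i+1}$ for each $i$ (indices read mod $k$), where $k \ge 2$. Then the edges of this cycle exhibit at least $k$ distinct crossings among the $k$ arcs $a_1,\dots,a_k$, contradicting Proposition~\ref{prop:simple_tangle}(iii), which bounds the number of crossings among any $k$ arcs by $k-1$. Hence $G$ has no cycle. (Incidentally, the case $k=2$ shows that two arcs in a simple tangle cross at most once, so no separate ``no bigon'' argument is needed.)

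Finally, I would invoke the fact that a nonempty forest always has a vertex of degree at most one: if $G$ has no edge, every vertex is isolated and any arc works; if $G$ has an edge, an endpoint of a longest path in $G$ must be a leaf, since otherwise a further neighbor — which cannot already lie on the path, as $G$ is acyclic — would extend it. Taking an arc $a$ corresponding to such a vertex, we conclude that $a$ participates in at most one crossing in total, and in particular meets all other arcs of $T$ in at most one crossing, as claimed. (Here we use, as implicit in the statement, that $T$ has at least one arc.)

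I do not expect a real obstacle here; the only point requiring care is the bookkeeping in the second step — confirming that the $k$ arcs lying on a cycle of $G$ genuinely have at least $k$ mutual crossings, one for each edge of the cycle, so that Proposition~\ref{prop:simple_tangle}(iii) may be applied with $N=k$. Everything else is standard graph theory.
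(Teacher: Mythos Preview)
Your argument is correct. The paper's own proof is shorter and more direct: it assumes for contradiction that every arc meets the others in at least two crossings, so with $N$ arcs the handshake count gives at least $2N$ arc--crossing incidences and hence at least $N$ crossings, contradicting a single application of Proposition~\ref{prop:simple_tangle}(iii) to the full set of $N$ arcs. You instead set up the crossing multigraph and apply Proposition~\ref{prop:simple_tangle}(iii) to each potential cycle to conclude that the graph is a forest, then invoke the leaf lemma. Both routes are elementary and rest on the same key input; the paper's is the minimal counting version, while yours yields the extra structural fact that the crossing graph is a forest (so in particular any two arcs of a simple tangle meet at most once), which may be of independent use.
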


\begin{proof}
Suppose to the contrary that every arc in $T$ meets other arcs in at least two crossings. If $T$ has $N$ arcs, then the total number of crossings $m$ satisfies
\[
  m \;\geq\; \frac{2N}{2} \;=\; N,
\]
since each crossing involves two arcs. This contradicts Proposition~\ref{prop:simple_tangle}, which asserts that any $N$ arcs produce at most $N-1$ crossings. Hence the lemma follows.
\end{proof}

Consider $(k,q)\in \mathbb{Z}\times \mathbb{Z}$. Define the multiplication
\[
   (k,q)\cdot (k',q') = (k+k',\, q+q').
\]
This operation is commutative, and hence $(\mathbb{Z}\times \mathbb{Z},\cdot)$ is an abelian group.
Now consider the group algebra $\mathbb{K}[\mathbb{Z}\times \mathbb{Z}]$. The multiplication in this algebra is induced by the above group law.

\begin{theorem}\label{theorem:simple_tangle}
Let $T$ be a simple tangle consisting of $N$ arcs, endowed with an orientation. Suppose that $T$ has $n_{+}$ right-handed crossings and $n_{-}$ left-handed crossings. Then the Khovanov homology of $T$ has $2^{n_{+}+n_{-}}$ generators, whose homological and quantum gradings are determined by the expansion of
\[
    (0,-1)^{\,N-n_{+}-n_{-}}\big[(0,0)+(1,1)\big]^{n_{+}}\big[(-1,-3)+(0,-2)\big]^{n_{-}}.
\]
\end{theorem}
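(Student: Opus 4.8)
The plan is to induct on the number of arcs $N$, peeling off one arc at a time via Lemma~\ref{lemma:tangle_reduction} and tracking the effect on Khovanov homology through Theorem~\ref{theorem:arc_reduction}. I read the hypothesis ``$T$ consists of $N$ arcs'' as saying that $T$ has exactly $N$ arc components and no circle components; even a single circle would contribute a non-monomial factor such as $(0,1)+(0,-1)$, so the monomial answer forces the circle part of the simple tangle (cf.\ the corollary after Proposition~\ref{prop:simple_tangle}) to be empty. For the base case $N=0$ the tangle is empty and its Khovanov homology is $\mathbb{K}$ in bidegree $(0,0)=(0,-1)^{0}$; the single-arc case $N=1$ (Khovanov homology $W$, one-dimensional of bidegree $(0,-1)$) is then the $N=1$ instance of the formula and is recovered by the first case of the inductive step below.

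For the inductive step I would take a simple tangle $T$ with $N\geq 1$ arcs, choose by Lemma~\ref{lemma:tangle_reduction} an arc $a\subseteq T$ meeting all other arcs of $T$ in at most one crossing, and set $T_{1}=T\setminus a$. Deleting a component can only help the remaining arcs stay pure, so $T_{1}$ is again a simple tangle, now with $N-1$ arcs, and it inherits the orientation of $T$. Two cases arise. If $a$ has no crossing with the rest of $T$, then (having no self-crossing, by Proposition~\ref{prop:simple_tangle}(ii)) it is a wholly crossingless arc split off from $T_{1}$, so $T=T_{1}\sqcup\Omega$ and $Kh^{\ast}(T)\cong Kh^{\ast}(T_{1})\otimes W$ — disjoint union going to tensor product, as used in the proof of Theorem~\ref{theorem:arc_reduction} — with $n_{\pm}(T_{1})=n_{\pm}(T)$. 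If $a$ has exactly one crossing $c$ with the rest of $T$ (necessarily with another arc, by Proposition~\ref{prop:simple_tangle}(i)), then at $c$ the strand $a$ is either the over- or the under-strand; since $a$ meets $T_{1}$ nowhere else it may be isotoped to lie entirely above $T_{1}$ in the first subcase and entirely below in the second, so that $T$ is literally ``$T_{1}$ with an arc added through a single crossing, above or below'' and Theorem~\ref{theorem:arc_reduction} applies verbatim; moreover $n_{+}(T_{1})=n_{+}(T)-1$ and $n_{-}(T_{1})=n_{-}(T)$ if $c$ is right-handed, and symmetrically if $c$ is left-handed.

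Next I would repackage the gradings multiplicatively inside $\mathbb{K}[\mathbb{Z}\times\mathbb{Z}]$: let $P_{T}$ denote the sum of the monomials $(k,q)$ taken, with multiplicity, over all generators of the bigraded Khovanov homology of $T$. The three reductions above then read $P_{T}=P_{T_{1}}\cdot(0,-1)$; $P_{T}=P_{T_{1}}\cdot\big[(0,0)+(1,1)\big]$, since $H^{k-1,q-1}(T_{1})$ feeds into $H^{k,q}(T)$, i.e.\ a shift by $(+1,+1)$; and $P_{T}=P_{T_{1}}\cdot\big[(-1,-3)+(0,-2)\big]$, since $H^{k+1,q+3}(T_{1})$ and $H^{k,q+2}(T_{1})$ feed into $H^{k,q}(T)$. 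Plugging in the inductive expression for $P_{T_{1}}$ and using commutativity of $\mathbb{K}[\mathbb{Z}\times\mathbb{Z}]$, each case collapses to the asserted product; the one identity to check is that the exponent of $(0,-1)$ is unchanged, e.g.\ $(N-1)-(n_{+}-1)-n_{-}=N-n_{+}-n_{-}$ in the right-handed subcase. Unwinding the whole induction, each of the $N$ peels contributes exactly one factor: one peel per crossing of $T$ is a one-crossing peel (each crossing is destroyed exactly once), giving $n_{+}$ factors $\big[(0,0)+(1,1)\big]$ and $n_{-}$ factors $\big[(-1,-3)+(0,-2)\big]$, while the remaining $N-n_{+}-n_{-}$ peels each give $(0,-1)$; expanding, the number of monomials counted with multiplicity is $2^{n_{+}+n_{-}}$, which is the asserted generator count.

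The statement is, in the end, a bookkeeping consequence of Lemma~\ref{lemma:tangle_reduction} and Theorem~\ref{theorem:arc_reduction}, so I do not expect a deep obstacle. The two points that need genuine care are: (a) the geometric reduction in the one-crossing case — verifying that an arc meeting the rest of the tangle in a single crossing really does exhibit $T$ as the above/below ``arc addition'' required by Theorem~\ref{theorem:arc_reduction}, with the handedness of that crossing matching its right/left dichotomy there; and (b) keeping the grading shifts of Theorem~\ref{theorem:arc_reduction} oriented the right way (contributions of $H(T_{1})$ \emph{into} $H(T)$, not the reverse), so that the exponents in the final product come out as stated rather than with inverted signs.
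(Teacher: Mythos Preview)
Your proposal is correct and follows essentially the same approach as the paper: peel off one arc at a time using Lemma~\ref{lemma:tangle_reduction}, split into the zero-crossing and one-crossing cases, apply Theorem~\ref{theorem:arc_reduction} in the latter, and track the bigraded generators multiplicatively in $\mathbb{K}[\mathbb{Z}\times\mathbb{Z}]$. The paper presents this as an iterated reduction terminating at a single remaining arc rather than as a formal induction on $N$, but the content is identical; if anything, you are more explicit about why $T_{1}$ stays simple and about the geometric point~(a), which the paper passes over silently when invoking Theorem~\ref{theorem:arc_reduction}.
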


\begin{proof}
Since $T$ is simple, Lemma~\ref{lemma:tangle_reduction} ensures that there exists an arc in $T$ that intersects all other arcs in at most one crossing.

If this arc has no crossings with the rest of $T$, then
\[
  H^{k}(T) \;\cong\; W \otimes H^{k}(T_1),
\]
where $T_1$ is the tangle obtained from $T$ by removing this arc. In terms of the quantum grading, we have
\[
  H^{k,q}(T) \;\cong\; H^{k,q+1}(T_1).
\]

If this arc meets the rest of $T$ in exactly one crossing, then by Theorem~\ref{theorem:arc_reduction} we obtain
\begin{itemize}
    \item If the crossing is right-handed, then
    \[
       H^{k,q}(T) \;\cong\; H^{k,q}(T_1) \oplus H^{k-1,q-1}(T_1).
    \]
    \item If the crossing is left-handed, then
    \[
       H^{k,q}(T) \;\cong\; H^{k+1,q+3}(T_1) \oplus H^{k,q+2}(T_1).
    \]
\end{itemize}

Thus, in the right-handed case, each generator $(k,q)$ of $H(T_1)$ gives rise to generators
\[
   (k,q)\big[(0,0)+(1,1)\big],
\]
while in the left-handed case we obtain
\[
   (k,q)\big[(-1,-3)+(0,-2)\big].
\]

Hence, a single arc reduction introduces a multiplicative factor in the generating polynomial, which is
\[
  (0,-1), \quad \big[(0,0)+(1,1)\big], \quad \big[(-1,-3)+(0,-2)\big],
\]
corresponding respectively to removing a free arc, a right-handed crossing, or a left-handed crossing.

Since $T_1$ remains a simple tangle, we may continue this reduction process until only one arc is left. Performing $n_{+}$ right-handed reductions, $n_{-}$ left-handed reductions, and $j$ free arc reductions yields
\[
   (0,-1)(0,-1)^j \big[(0,0)+(1,1)\big]^{n_{+}} \big[(-1,-3)+(0,-2)\big]^{n_{-}},
\]
where $(0,-1)$ corresponds to the single remaining arc.

Noting that each reduction removes one arc, the number of free arc reductions is
\[
  j \;=\; N-n_{+}-n_{-}-1.
\]
Therefore the generators of $H(T)$ are encoded by
\[
    (0,-1)^{\,N-n_{+}-n_{-}} \big[(0,0)+(1,1)\big]^{n_{+}} \big[(-1,-3)+(0,-2)\big]^{n_{-}},
\]
and every term in its expansion corresponds to one generator, with its homological and quantum gradings determined by the exponents.
\end{proof}

\begin{definition}[\cite{przytycki2024lectures}]
Let $H = \bigoplus_{i,j} H^{i,j}$ be a bigraded cohomology vector space, where $H^{i,j}$ denotes the subspace of cohomology classes of bidegree $(i,j)$.
The \textbf{Poincar\'e polynomial} of $H$ is defined by
\[
\mathcal{P}_H(x,y) = \sum_{i,j} (\dim H^{i,j}) \, x^i y^j.
\]
Here, $\dim H^{i,j}$ counts the number of generators in bidegree $(i,j)$.
\end{definition}
In the case of Khovanov homology or other bigraded cohomology theories, $\mathcal{P}_H(x,y)$ encodes the distribution of generators over homological and quantum gradings. In particular, by setting $x=-1$, the Poincar\'e polynomial of the homology of tangles specializes to the Jones polynomial.

\begin{theorem}\label{theorem:poincare}
Let $T$ be a simple tangle consisting of $N$ arcs, endowed with an orientation. Suppose that $T$ has $n_{+}$ right-handed crossings and $n_{-}$ left-handed crossings. Then the Poincar\'e polynomial of $T$ is given by
\[
  \mathcal{P}_{T}(x,y)=y^{-N+n_{+}+n_{-}}(1+xy)^{n_{+}}\,(x^{-1}y^{-3}+y^{-2})^{n_{-}}.
\]
\end{theorem}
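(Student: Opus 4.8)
The plan is to deduce Theorem~\ref{theorem:poincare} directly from Theorem~\ref{theorem:simple_tangle} by translating the group-algebra bookkeeping into the language of Poincar\'e polynomials. The point is that the expansion of
\[
    (0,-1)^{\,N-n_{+}-n_{-}}\big[(0,0)+(1,1)\big]^{n_{+}}\big[(-1,-3)+(0,-2)\big]^{n_{-}}
\]
in $\mathbb{K}[\mathbb{Z}\times\mathbb{Z}]$ records exactly one basis element $v_{(k,q)}$ for each generator of $H(T)$ in bidegree $(k,q)$. Hence, under the algebra homomorphism $\mathbb{K}[\mathbb{Z}\times\mathbb{Z}]\to\mathbb{K}[x,x^{-1},y,y^{-1}]$ sending $v_{(k,q)}\mapsto x^{k}y^{q}$ (which is well defined and multiplicative because it sends the group law $(k,q)\cdot(k',q')=(k+k',q+q')$ to the multiplication of monomials), the image of the above product is precisely $\sum_{k,q}(\dim H^{k,q}(T))\,x^{k}y^{q}=\mathcal{P}_{T}(x,y)$.

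First I would state and verify this monomial substitution: $(0,0)\mapsto 1$, $(1,1)\mapsto xy$, $(-1,-3)\mapsto x^{-1}y^{-3}$, $(0,-2)\mapsto y^{-2}$, and $(0,-1)\mapsto y^{-1}$. Since Theorem~\ref{theorem:simple_tangle} guarantees that every term in the expansion of the product corresponds to exactly one generator with gradings read off from the exponents, applying the substitution factor-by-factor gives
\[
  \mathcal{P}_{T}(x,y) = (y^{-1})^{\,N-n_{+}-n_{-}}\,(1+xy)^{n_{+}}\,(x^{-1}y^{-3}+y^{-2})^{n_{-}},
\]
which is the claimed formula after rewriting $(y^{-1})^{N-n_{+}-n_{-}}$ as $y^{-N+n_{+}+n_{-}}$. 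One small check worth including: the total dimension obtained by setting $x=y=1$ is $2^{n_{+}+n_{-}}$, consistent with the generator count in Theorem~\ref{theorem:simple_tangle}.

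There is essentially no obstacle here; the content of the result lives entirely in Theorem~\ref{theorem:simple_tangle}, and the present theorem is a reformulation. The only point requiring a line of care is to confirm that distinct terms in the expansion which happen to land in the same bidegree are correctly added (rather than identified), so that the coefficient of $x^{k}y^{q}$ genuinely equals $\dim H^{k,q}(T)$ — but this is exactly what passing from the group algebra (where $v_{(k,q)}$ are linearly independent) to the polynomial ring via the multiplicative map accomplishes, since the map is additive as well. I would therefore keep the proof short: set up the homomorphism, invoke Theorem~\ref{theorem:simple_tangle}, and read off the factored form.
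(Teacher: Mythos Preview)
Your proposal is correct and follows exactly the approach implicit in the paper: Theorem~\ref{theorem:poincare} is stated there without a separate proof, being an immediate reformulation of Theorem~\ref{theorem:simple_tangle} via the substitution $(k,q)\mapsto x^{k}y^{q}$, which is precisely the algebra homomorphism you describe.
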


In Theorem \ref{theorem:poincare}, each monomial $x^k y^q$ in its expansion corresponds to a generator of the tangle's Khovanov homology, with homological degree $k$ and quantum grading $q$.

\subsection{Examples}

\begin{example}
In this example, we choose an orientation so that the tangle has $n_{+}$ right-handed crossings and no left-handed crossings. Then the tangle contains $n_{+}+1$ arcs, as shown in Figure~\ref{figure:simple_tangle}.
\begin{figure}[h]
  \centering
  % Requires \usepackage{graphicx}
  \includegraphics[width=0.3\textwidth]{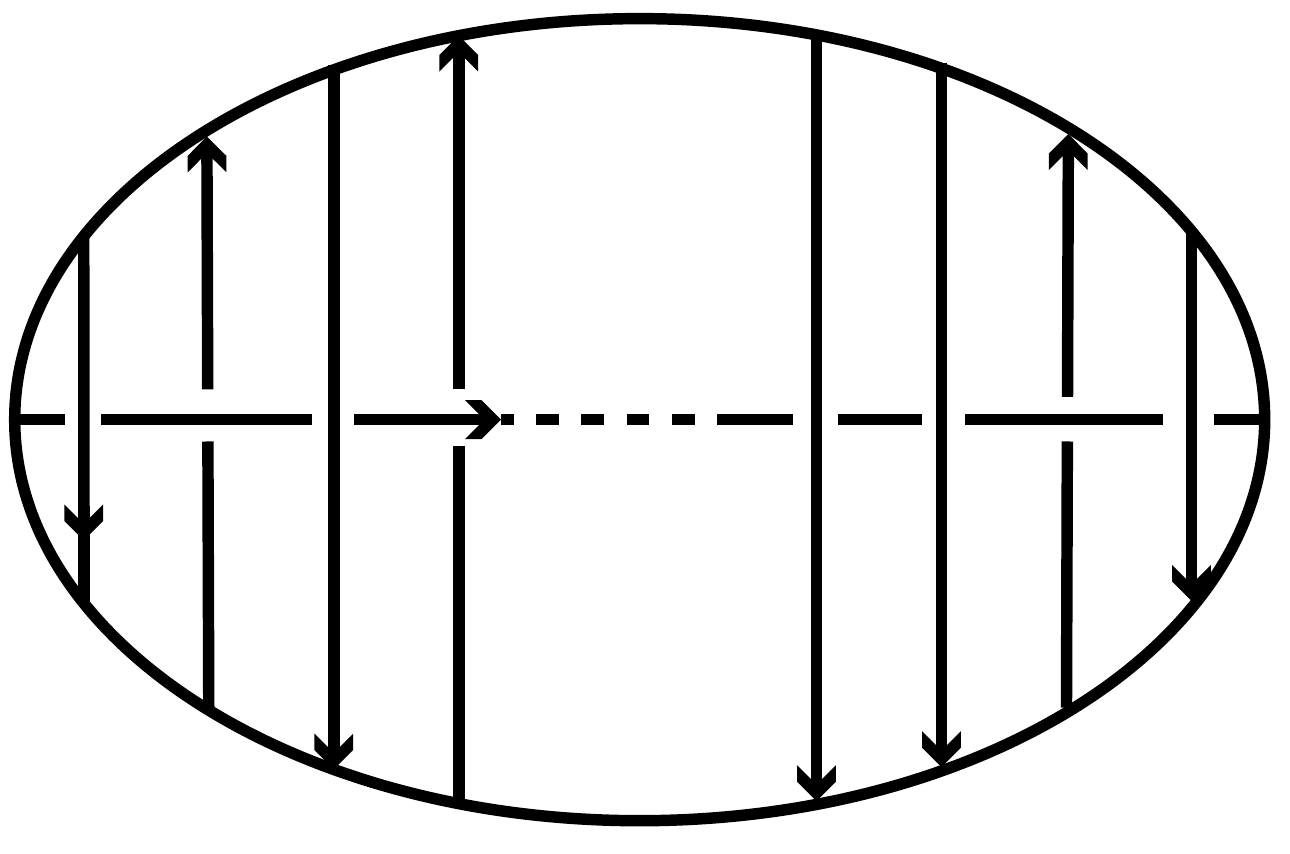}\\
  \caption{Illustration of a simple tangle in which parallel arcs intersect a given arc.}
  \label{figure:simple_tangle}
\end{figure}
By Theorem~\ref{theorem:simple_tangle}, the Khovanov homology of this tangle has $2^{\,n_{+}}$ generators, corresponding to the expansion of
\[
  (0,-1) \big[(0,0)+(1,1)\big]^{n_{+}}.
\]

Alternatively, this can be interpreted using the Poincar\'e polynomial
\[
  y^{-1}(1+xy)^{\,n_{+}}.
\]
Each monomial $x^k y^q$ in its expansion corresponds to a generator of the Khovanov homology with homological degree $k$ and quantum grading $q$. Note that
\[
   (1+xy)^{\,n_{+}} = \sum_{i=0}^{n_{+}} \binom{n_{+}}{i} x^{i}y^{i}.
\]
This implies that the Khovanov homology of this tangle has $\binom{n_{+}}{k}$ generators with homological degree $k$ and quantum grading $k-1$, for $k = 0,1,\dots,n_{+}$.
\end{example}

\begin{example}\label{example:4arcs}
Consider the tangle $T$ illustrated in Figure~\ref{figure:simple_tangle2}.

\begin{figure}[h]
  \centering
  % Requires \usepackage{graphicx}
  \includegraphics[width=0.15\textwidth]{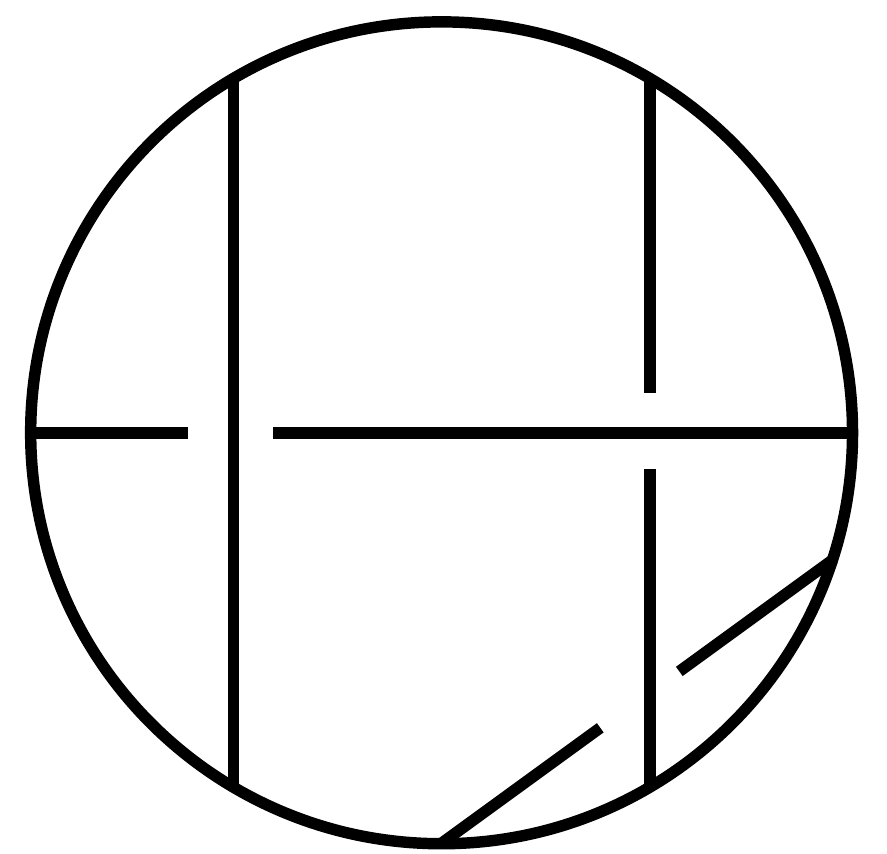}\\
  \caption{Illustration of a simple tangle with four arcs.}
  \label{figure:simple_tangle2}
\end{figure}

This tangle admits $2^4 = 16$ possible orientations, although some of them may be equivalent. By Theorem~\ref{theorem:simple_tangle}, the Khovanov homology of $T$ is determined by the number of right-handed and left-handed crossings. Consequently, there are four distinct cases for the Khovanov homology corresponding to different combinations of crossings. We compute them below.

\begin{enumerate}[label=(\roman*)]
    \item When $n_{+}=0$ and $n_{-}=3$, the distribution of generators is
    \[
      (-3,-10) + 3(-2,-9) + 3(-1,-8) + (0,-7).
    \]
    The corresponding Poincar\'e polynomial is
    \[
      \mathcal{P}_{T}(x,y) = x^{-3}y^{-10} + 3 x^{-2}y^{-9} + 3 x^{-1}y^{-8} + y^{-7}.
    \]
    This polynomial encodes the distribution of generators in homological degree and quantum grading.

    \item When $n_{+}=1$ and $n_{-}=2$, the distribution of generators is
    \[
      (-2,-7) + 3(-1,-6) + 3(0,-5) + (1,-4).
    \]
    The corresponding Poincar\'e polynomial is
    \[
      \mathcal{P}_{T}(x,y) = x^{-2}y^{-7} + 3 x^{-1}y^{-6} + 3 y^{-5} + x y^{-4}.
    \]

    \item When $n_{+}=2$ and $n_{-}=1$, the distribution of generators is
    \[
      (-1,-4) + 3(0,-3) + 3(1,-2) + (2,-1).
    \]
    The corresponding Poincar\'e polynomial is
    \[
      \mathcal{P}_{T}(x,y) = x^{-1}y^{-4} + 3 y^{-3} + 3 x y^{-2} + x^2 y^{-1}.
    \]

    \item When $n_{+}=3$ and $n_{-}=0$, the distribution of generators is
    \[
      (0,-1) + 3(1,0) + 3(2,1) + (3,2).
    \]
    The corresponding Poincar\'e polynomial is
    \[
      \mathcal{P}_{T}(x,y) = y^{-1} + 3 x + 3 x^2 y + x^3 y^2.
    \]
\end{enumerate}
\end{example}

\section{Classification tables for low-crossing tangles}\label{section:classification}

In this section, we study the classification and computation of tangles with at most three crossings. Here, only consider the connected tangles, since the Khovanov homology of a tangle is the tensor product of the Khovanov homologies of its connected components \cite{shen2025algorithm}. Moreover, we consider tangles up to Reidemeister move equivalence, so that equivalent tangles under such moves are identified. In addition, when depicting tangles, the illustrations may not include their mirror-symmetric counterparts. However, in our computations these cases are automatically taken into account by considering the signs of the crossings.

\subsection{Tangles with at most two crossings}

First, a tangle with zero crossings has only two possibilities: either a single arc or a single circle.

Second, a tangle with one crossing has, up to Reidemeister moves, only one type: a crossing formed by two arcs. Intuitively, a single arc could form a self-crossing, but such a crossing can be removed by an $R1$ move, reducing it to a trivial arc.
\begin{figure}[h]
  \centering
  % Requires \usepackage{graphicx}
  \includegraphics[width=0.6\textwidth]{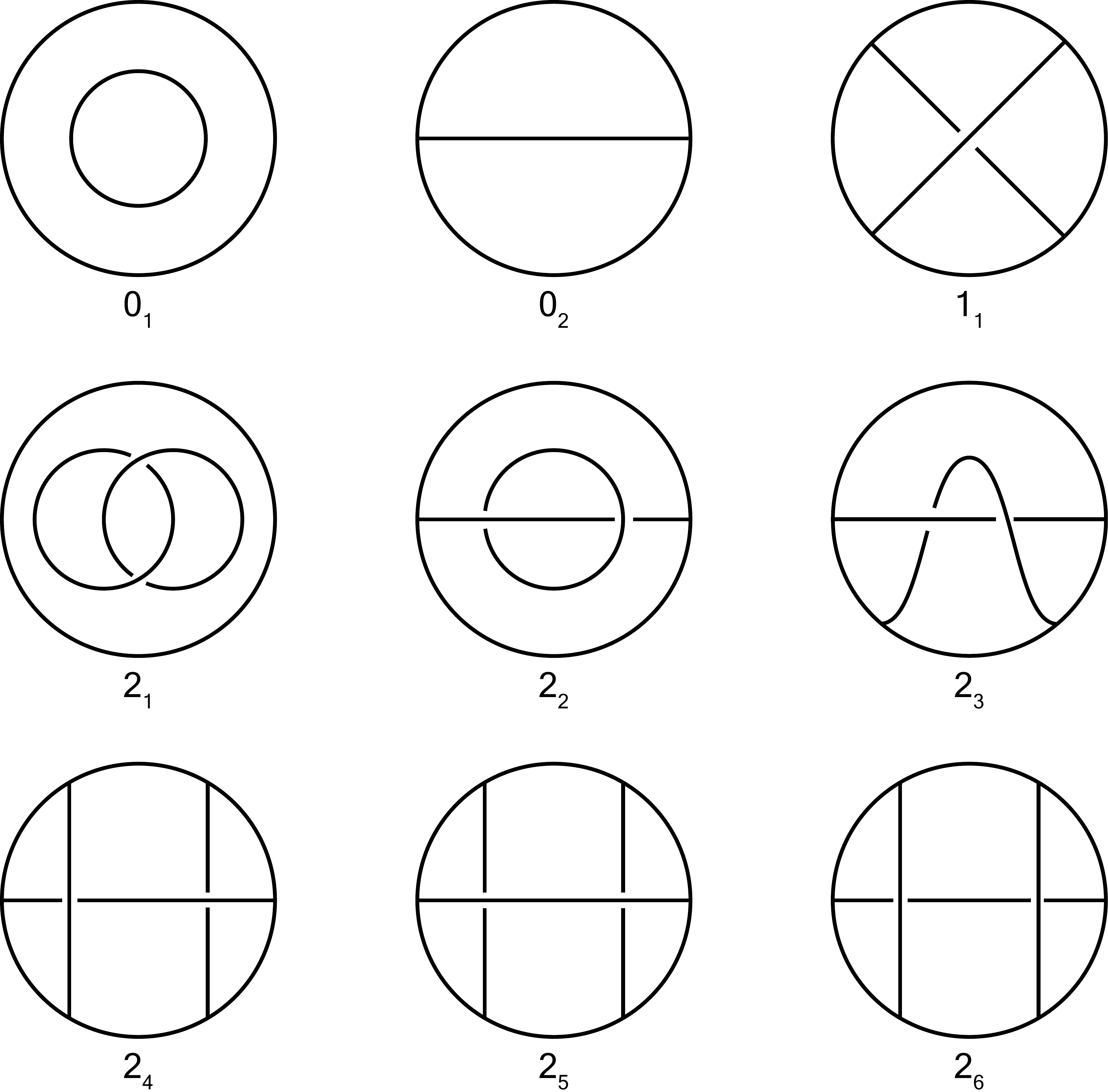}\\
  \caption{Illustration of tangles with 0,1, and 2 crossings.}
  \label{figure:tangle_table_012}
\end{figure}

The case of two crossings is more involved. If the crossings are formed by two circles, the tangle is a Hopf link. If a circle and an arc form a crossing, the tangle corresponds to type $2_2$ in Figure~\ref{figure:tangle_table_012}. If a single arc forms a tangle with two self-crossings, then this tangle can be simplified to a trivial arc by two $R1$ moves. If the tangle is formed by two arcs, the only possibility is type $2_3$ in Figure~\ref{figure:tangle_table_012}; indeed, if the two arcs intersect in only one crossing while one of them forms a self-crossing, the self-crossing can be eliminated by an $R1$ move. Finally, consider the case of three arcs: here, exactly one arc intersects the other two to form crossings, as illustrated by types $2_4$, $2_5$, and $2_6$ in Figure~\ref{figure:tangle_table_012}. These three tangles are not related by Reidemeister moves, and hence they are considered distinct tangles.

The above discussion covers tangles with at most two crossings, without considering orientations. In fact, once orientations are taken into account, tangles can be distinguished into more types.
For knots, the Khovanov homology is independent of orientation; however, for tangles, it is highly sensitive to orientation. The effect of different orientations on the Khovanov homology of a tangle is mainly reflected in the numbers of right-handed and left-handed crossings: if two orientations have the same $(n_{+}, n_{-})$, then their Khovanov homologies coincide \cite[Theorem 4.2]{shen2025algorithm}. Therefore, in Table~\ref{table:knot_types}, we compute the Poincar\'{e} polynomials of each tangle type for the different signs of crossings. Here we also compute the mirror type of a tangle, namely the tangle type obtained by taking the mirror reflection, where $n'_{i}$ denotes the mirror type corresponding to the type of $n_{i}$.

\begin{table}
  \centering
\begin{tabular}{c|c|c}
    \hline
    Tangle type & Sign of crossings & Poincar\'{e} polynomial \\
    \hline    \hline
    $0_0$ & $\emptyset$ & $y+y^{-1}$ \\
    \hline
    $0_1$ & $\emptyset$ & $y^{-1}$ \\
    \hline
    $1_1$ & \makecell{$\{+\}$ \\ $\{-\}$} & \makecell{$x+y^{-1}$  \\ $y^{-3}+x^{-1}y^{-4}$  } \\
    \hline
    $2_1$ & \makecell{$\{+,+\}$ \\ $\{-,-\}$} & \makecell{ $1+y^{2}+x^{2}y^{4}+x^{2}y^{6}$\\  $1+y^{-2}+x^{-2}y^{-4}+x^{-2}y^{-6}$} \\
    \hline
    $2'_1$ & \makecell{$\{+,+\}$ \\ $\{-,-\}$} & \makecell{ $1+y^{2}+x^{2}y^{4}+x^{2}y^{6}$\\  $1+y^{-2}+x^{-2}y^{-4}+x^{-2}y^{-6}$} \\
    \hline
    $2_2$ & \makecell{$\{+,+\}$ \\ $\{-,-\}$} & \makecell{$1+x^{2}y^{4}$ \\ $y^{-2}+x^{-2}y^{-6}$} \\
    \hline
    $2_3$ & \makecell{$\{+,+\}$ \\ $\{-,-\}$} & \makecell{ $1+xy+x^{2}y^{3}$ \\ $y^{-3}+x^{-1}y^{-5}+x^{-2}y^{-6}$} \\
    \hline
    $2'_3$ & \makecell{$\{+,+\}$ \\ $\{-,-\}$} & \makecell{$x^2y^2+xy+y^{-1}$ \\ $x^{-2}y^{-7}+x^{-1}y^{-5}+y^{-4}$} \\
    \hline
    $2_4$, $2_5$, $2_6$ & \makecell{$\{+,+\}$ \\ $\{+,-\}$ \\ $\{-,-\}$} & \makecell{ $x^{2}y+2x+y^{-1}$ \\ $xy^{-2}+2y^{3}+x^{-1}y^{-4}$ \\ $y^{-5}+2x^{-1}y^{-6}+x^{-2}y^{-7}$  } \\
    \hline
\end{tabular}
  \caption{Poincar\'{e} polynomials of tangles with at most two crossings.}\label{table:knot_types}
\end{table}

\subsection{Tangles with three crossings}

Now we turn to tangles with three crossings. We begin with the case of a single component. If the component is closed, we obtain the trefoil knot. Indeed, there are two kind of trefoil with different sign of crossings. If the component is open, then the corresponding tangle is the $3_2$ type in Figure~\ref{figure:tangle_table_3}.

\begin{figure}[h]
  \centering
  \includegraphics[width=0.6\textwidth]{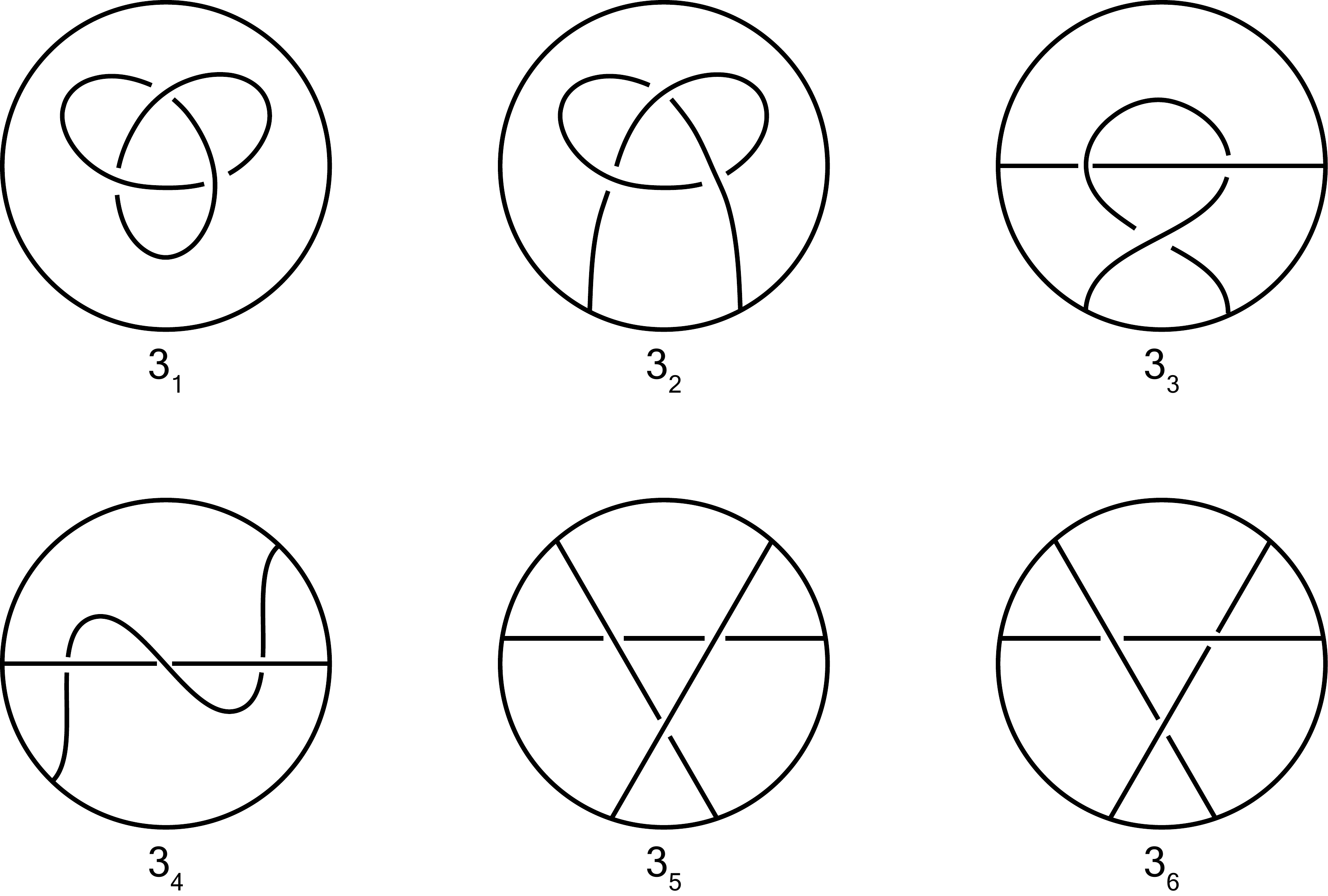}
  \caption{Tangles with three crossings.}
  \label{figure:tangle_table_3}
\end{figure}

Next, consider the case of two components. If one of them is a closed curve, then the other must contribute an even number of crossings. In this situation, the closed component contributes one self-crossing, which can be removed by an $R1$ move. Thus the configuration reduces to the case of two crossings. Therefore both components must be arcs. If one arc has a self-crossing, the only possibility is the $3_3$ type in Figure~\ref{figure:tangle_table_3}. Note that if the horizontal arc lies entirely above or below the other arc, an $R2$ move reduces the diagram. Hence the horizontal arc must pass through the other arc. If it passes in the alternative way shown in Figure~\ref{figure:counter_tangle}, then one crossing can be removed by a sequence of $R3$ and $R1$ moves.

\begin{figure}[h]
  \centering
  \includegraphics[width=0.4\textwidth]{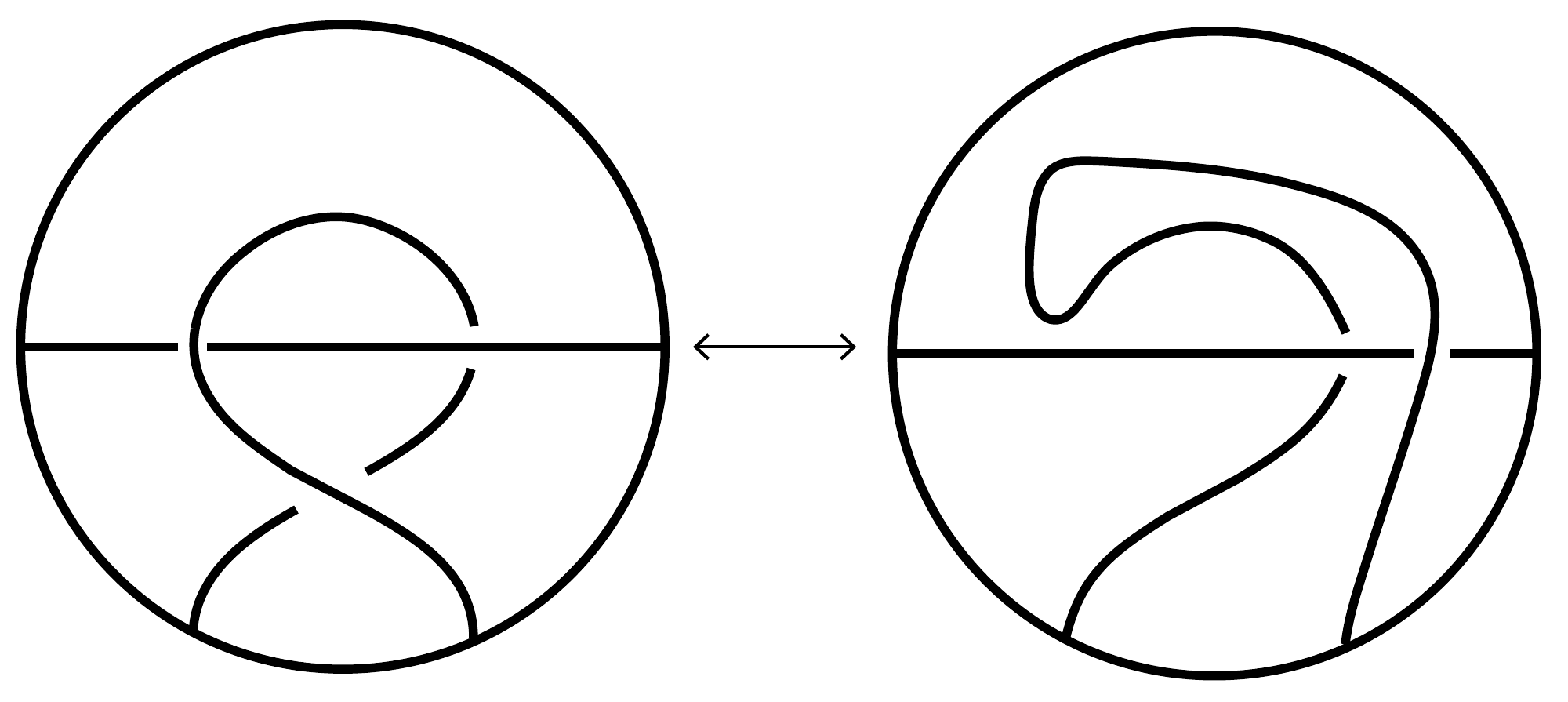}
  \caption{Illustration of the tangle reduction.}
  \label{figure:counter_tangle}
\end{figure}

If one arc intersects the other twice, the diagram corresponds to the $3_4$ type in Figure~\ref{figure:tangle_table_3}; all other possibilities reduce by Reidemeister moves. Finally, if all three components are arcs, then up to Reidemeister moves only the two configurations $3_5$ and $3_6$ in Figure~\ref{figure:tangle_table_3} occur.

Consider the case of a tangle with four arcs and three crossings. Note that . By Proposition~\ref{proposition:simple_tangle}, the tangle must be simple. We can then apply Theorem~\ref{theorem:poincare} to compute its Poincar\'e polynomial. The result can be obtained from the computation in Example~\ref{example:4arcs}.

\begin{table}
  \centering
\begin{tabular}{c|c|c}
    \hline
    Tangle type & Sign of crossings & Poincar\'{e} polynomial \\
    \hline    \hline
    $3_1$ &  \makecell{$\{-,-,-\}$} & $x^{-3}y^{-9}+x^{-2}y^{-5}+y^{-3}+y^{-1}$\\
    \hline
    $3'_1$ &  \makecell{$\{+,+,+\}$} & $x^3y^9+x^2y^5+y^3+y$\\
    \hline
    $3_2$ & \makecell{$\{-,-,-\}$ }  &  $x^{-3}y^{-9}+x^{-2}y^{-7}+y^{-3}$\\
    \hline
    $3'_2$ & \makecell{$\{+,+,+\}$}  &  $x^3y^7+x^2y^5+y$\\
    \hline
    $3_3$ & \makecell{$\{-,-,-\}$ \\ $\{+,+,-\}$ \\} &\makecell{$x^{-3}y^{-9}+x^{-2}y^{-8}+x^{-2}y^{-7}+x^{-1}y^{-6}+y^{-4}$ \\ $x^2y^2+x+y^{-1}+y^{-2}+x^{-1}y^{-3}$}   \\
    \hline
    $3'_3$ & \makecell{$\{+,+,+\}$ \\ $\{-,-,+\}$ \\} & \makecell{$x^3y^5+x^2y^4+x^2y^3+xy^2+1$\\$x^{-2}y^{-6}+x^{-1}y^{-4}+y^{-3}+y^{-2}+xy^{-1}$}  \\
    \hline
    $3_4$ & \makecell{$\{+,+,+\}$ \\ $\{-,-,-\}$} & \makecell{$x^3y^6+x^2y^4+xy^2+y$\\$x^{-3}y^{-8}+x^{-2}y^{-7}+x^{-1}y^{-5}+y^{-3}$} \\
    \hline
    $3'_4$ & \makecell{$\{-,-,-\}$ \\ $\{+,+,+\}$} &\makecell{$x^{-3}y^{-10}+x^{-2}y^{-8}+x^{-1}y^{-6}+y^{-5}$\\$x^3y^4+x^2y^3+xy+y^{-1}$}  \\
    \hline
    $3_5$ & \makecell{$\{+,+,+\}$ \\ $\{+,-,-\}$} & \makecell{$2x^2y^2+2xy+1$\\ $x^{-2}y^{-6}+2x^{-1}y^{-5}+2y^{-4}$} \\
    \hline
      $3'_5$ & \makecell{$\{-,-,-\}$ \\ $\{-,+,+\}$} & \makecell{$2x^{-2}y^{-8}+2x^{-1}y^{-7}+y^{-6}$\\$x^2+2xy^{-1}+2y^{-2}$ } \\
    \hline

    $3_6$ & \makecell{$\{-,-,-\}$ \\ $\{+,+,-\}$} & \makecell{$x^{-3}y^{-9}+3x^{-2}y^{-8}+2x^{-1}y^{-7}+y^{-5}$\\ $x^2y+2xy^{-1}+3y^{-2}+x^{-1}y^{-3}$ } \\
   \hline
     $3'_6$ & \makecell{$\{+,+,+\}$ \\ $\{-,-,+\}$} & \makecell{$x^3y^3+3x^2y^2+2xy+y^{-1}$\\$xy^{-3}+3y^{-4}+2x^{-1}y^{-5}+x^{-2}y^{-7}$ } \\
     \hline
     Tangle with 4 arcs & \makecell{$\{+,+,+\}$ \\ $\{+,+,-\}$\\ $\{+,-,-\}$\\ $\{-,-,-\}$} & \makecell{$y^{-1} + 3 x + 3 x^2 y + x^3 y^2$\\$x^{-1}y^{-4} + 3 y^{-3} + 3 x y^{-2} + x^2 y^{-1}$\\ $x^{-2}y^{-7} + 3 x^{-1}y^{-6} + 3 y^{-5} + x y^{-4}$\\$x^{-3}y^{-10} + 3 x^{-2}y^{-9} + 3 x^{-1}y^{-8} + y^{-7}$} \\
    \hline
\end{tabular}
  \caption{Poincar\'{e} polynomials of tangles with three crossings.}\label{table:knot_types3}
\end{table}

\section*{Acknowledgments}

This work was supported in part by  Michigan State University Research Foundation and  Bristol-Myers Squibb  65109. Li was supported by an NITMB fellowship supported by grants from the NSF (DMS-2235451) and Simons Foundation (MP-TMPS-00005320). Jian was also supported by the Natural Science Foundation of China (NSFC Grant No. 12401080) and the start-up research fund from Chongqing University of Technology.

\bibliographystyle{plain}  % unsrtnat
\bibliography{Reference}

\end{document}